\documentclass{amsart}

\usepackage[utf8]{inputenc}
\usepackage[T1]{fontenc}
\usepackage{amsmath,amsthm,amsopn,amstext,amscd,amsfonts,amssymb,mathrsfs,mathtools}
\usepackage{float}
\usepackage{tikz}
\usepackage[hidelinks]{hyperref}

\DeclareMathOperator{\diag}{diag}
\newtheorem{theorem}{\sc Theorem}[section]
\theoremstyle{remark}
\newtheorem{remark}[theorem]{\sc Remark}
\theoremstyle{plain}

\begin{document}

\title{Symmetric products of Laguerre zeros}
\author{K. Castillo}
\address{CMUC, Department of Mathematics, University of Coimbra,
3000-143 Coimbra, Portugal}
\email{kenier@mat.uc.pt}
\subjclass[2020]{15A18, 33C45, 47B36}
\keywords{Hermite polynomials, Laguerre polynomials, finite-dimensional
quantisation, quadrature operators, Jacobi (tridiagonal) matrices,
extreme zeros}
\date{\today}

\begin{abstract}
We prove and substantially extend a conjecture of Gazeau, Josse-Michaux,
and Monceau concerning the extreme positive zeros of Hermite polynomials,
which arose from a finite-dimensional quantisation of the phase plane.
For the generalised Laguerre polynomial $L_m^{(\alpha)}$ of degree $m$
and parameter $\alpha>-1$, the product of its $j$th smallest and $j$th
largest zeros is strictly increasing with $m$ for each fixed positive
integer $j$, once $m\ge2j-1$. The classical
Hermite--Laguerre identities then prove the original conjecture for the
degree-$N$ Hermite polynomial $H_N$, in both parities and for every
$N\ge4$. As a further consequence, a one-parameter extension involving
a deformation of the corresponding Jacobi matrix is settled.
\end{abstract}

\maketitle

\section{Introduction\label{intro}}

In 2006, Gazeau, Josse-Michaux, and Monceau
\cite[Sections~4--5]{GazeauJosseMonceau2006} considered a
finite-dimensional quantisation of the phase plane built from the first
$N$ harmonic-oscillator states. The resulting position and momentum
operators, or quadrature operators in quantum optics, are finite
tridiagonal matrices with the same spectrum. Their eigenvalues are, up to
normalisation, the zeros of the physicists' Hermite polynomial $H_N$.
The smallest and largest positive eigenvalues therefore determine the
smallest and largest
position or momentum scales accessible within this finite-dimensional
model. This spectral interpretation led the authors to examine the product
of the corresponding extreme positive Hermite zeros.

More precisely, let
$$
0<\lambda_{1,N}<\cdots<\lambda_{\lfloor N/2\rfloor,N}
$$
be the positive zeros of $H_N$, and put
$\varpi_N=\lambda_{1,N}\lambda_{\lfloor N/2\rfloor,N}$. Their numerical
studies indicated that
\begin{align}\label{eq:original-hermite-conjecture}
\varpi_N<\varpi_{N+2},\quad N\ge4.
\end{align}
They also reported the asymptotic values $\pi/2$ along the even degrees and
$\pi$ along the odd degrees; see \cite[Section~5]{GazeauJosseMonceau2006}.

The standard Hermite--Laguerre identities are recorded in
\cite[(5.6.1)]{Szego1975}:
\begin{align}\label{eq:hermite-laguerre}
\begin{aligned}
H_{2n}(x)&=(-1)^n2^{2n}n!L_n^{(-1/2)}(x^2),\\[7pt]
H_{2n+1}(x)&=(-1)^n2^{2n+1}n!xL_n^{(1/2)}(x^2).
\end{aligned}
\end{align}
This suggests a more general question concerning Laguerre zeros. For
$\alpha>-1$, use the monic normalisation
\begin{align*}
p_m(x)=(-1)^m m!L_m^{(\alpha)}(x),\quad
0<x_{1,m}^{(\alpha)}<\cdots<x_{m,m}^{(\alpha)},
\quad y_m(\alpha)=x_{1,m}^{(\alpha)}x_{m,m}^{(\alpha)}.
\end{align*}
The positivity and simplicity of these zeros follow from the orthogonality
of the Laguerre polynomials; see \cite[(5.1.1) and
Theorem~3.3.1]{Szego1975}.
We prove the following extension of the conjecture.

\begin{theorem}\label{thm:symmetric-products}
For every $\alpha>-1$, every integer $m\ge2$, and every
$1\le j\le\lfloor m/2\rfloor$,
\begin{align}\label{eq:symmetric-product-main}
x_{j,m-1}^{(\alpha)}x_{m-j,m-1}^{(\alpha)}
<x_{j,m}^{(\alpha)}x_{m+1-j,m}^{(\alpha)}.
\end{align}
Consequently, for each fixed $j\ge1$, the sequence
$$
x_{j,m}^{(\alpha)}x_{m+1-j,m}^{(\alpha)},\quad m\ge2j-1,
$$
is strictly increasing.
\end{theorem}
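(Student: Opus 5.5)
The plan is to work on the logarithmic scale, where the symmetric product $x_{j,m}x_{m+1-j,m}$ becomes the sum $u_{j,m}+u_{m+1-j,m}$ with $u=\log x$. Put $x=e^{u}$ and $w_m(u)=x^{(\alpha+1)/2}e^{-x/2}L_m^{(\alpha)}(x)\cdot x^{-1/2}$, adjusted so that there is no first-derivative term. The Laguerre differential equation then takes the Liouville normal form
\begin{align*}
w_m''(u)+Q_m(u)\,w_m(u)=0,\qquad
Q_m(u)=-\tfrac14 e^{2u}+\tfrac12(2m+\alpha+1)e^{u}-\tfrac{\alpha^2}{4},
\end{align*}
on the whole line, and $w_m$ vanishes at both ends $u\to\pm\infty$. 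The zeros $u_{k,m}=\log x^{(\alpha)}_{k,m}$ are the zeros of this eigenfunction. The difference $Q_m-Q_{m-1}=e^{u}$ is positive everywhere but strongly $u$-dependent, so Sturm comparison shows that passing from $m-1$ to $m$ pushes the zeros outward from the oscillatory region. This gives the two interlacing statements $x_{j,m}<x_{j,m-1}$ and $x_{m-j,m-1}<x_{m+1-j,m}$. These go in opposite directions for the product, and the whole content of \eqref{eq:symmetric-product-main} is a quantitative comparison of the two displacements on the $u$-scale.

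The key steps, in order, are as follows.

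\emph{Step 1.} Interpolate between $m-1$ and $m$ by a continuous parameter. I would use $\nu\in[m-1,m]$ in $Q_\nu$, or equivalently the deformation of the last diagonal entry of the Jacobi matrix. This makes the indices $j$ (counted from the left) and $m+1-j$ (counted from the right) into zeros $u_j(\nu)$ and $u^*_j(\nu)$ of a single family. To avoid the jump in the number of zeros, I would count from the right end using Prüfer angles normalised at $+\infty$, and from the left end using Prüfer angles normalised at $-\infty$.

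\emph{Step 2.} Compute the velocities by the Hellmann--Feynman (Hadamard) formula:
\begin{align*}
\frac{du}{d\nu}\propto \frac{\int_{-\infty}^{u}e^{s}w^2\,ds}{w'(u)^2}
\end{align*}
for zeros counted from the left, and the analogous tail integral $\int_u^{\infty}$ for zeros counted from the right.

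\emph{Step 3.} Show that the rightward motion of the $j$th zero from the right dominates the leftward motion of the $j$th zero from the left. The mechanism I expect is that the weight $e^{s}$ in the perturbation is large exactly where the right-hand zeros live, while the left-hand zeros sit where $e^{s}$ is small. The ratio of the two tail integrals is then controlled by comparing the Prüfer amplitudes at the two ends. For this I would use the monotonicity of $|w|$ at its successive local maxima, from a Sonin-type function $w^2+w'^2/Q$ with $Q$ increasing or decreasing on each side of the maximum of $Q$. Once the displacement inequality on the $u$-scale is established, exponentiating gives \eqref{eq:symmetric-product-main}. The "consequently" clause is then just \eqref{eq:symmetric-product-main} with $m$ replaced by $m+1$, since $x_{j,m}x_{(m+1)-j,m}$ is exactly the left side at degree $m+1$.

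The main obstacle is Step 3, the quantitative domination near the point where the two counts meet: $j=\lfloor m/2\rfloor$, with $m$ small and $\alpha$ near $-1$. There the Sonin amplitude argument is weakest, because both zeros lie near the turning region. If the ODE route fails, my first fallback is purely algebraic. Substitute $x\mapsto c/x$ with $c=x_{j,m-1}x_{m-j,m-1}$ and study the sign of $p_m$ at the points $x_{j,m-1}$ and $c/x_{j,m-1}$. Using the recurrence
\begin{align*}
p_m(x)=(x-2m+1-\alpha)\,p_{m-1}(x)-(m-1)(m-1+\alpha)\,p_{m-2}(x),
\end{align*}
one has $p_m=-(m-1)(m-1+\alpha)p_{m-2}$ at zeros of $p_{m-1}$. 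The aim is then to show that $p_m$ has at least $j$ zeros below $x_{j,m-1}$ ... and the matching count on the reflected side, so that the zeros of $p_m$ straddle the hyperbola $xy=c$ in the required way. A sign-count argument of this kind would settle the claim.
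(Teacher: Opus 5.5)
Your Steps 1 and 2 are workable in outline: the solution normalised at $-\infty$ is counted from the left, the one decaying at $+\infty$ is counted from the right, and the Hadamard formula gives the velocities. Step 3, however, which you yourself identify as the entire content, contains no argument, and the tool you propose cannot supply one.

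Sonin's function $S=w^2+w'^2/Q_\nu$ satisfies $S'=-w'^2Q_\nu'/Q_\nu^2$. So it only orders the local maxima of $|w|$ separately on each side of the maximum of $Q_\nu$, at $x=2\nu+\alpha+1$. The inequality your plan needs is
\[
\frac{\int_{u_j^*}^{\infty}e^{s}w_+(s)^2\,ds}{w_+'(u_j^*)^2}
>\frac{\int_{-\infty}^{u_j}e^{s}w_-(s)^2\,ds}{w_-'(u_j)^2},
\qquad m-1\le\nu\le m .
\]
This is a cross-comparison between a left tail and a right tail, and when $\nu\notin\mathbb Z$ the tails belong to two different solutions. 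Each side also depends on the local frequency and on the turning-point structure, and $Q_\nu$ has no symmetry exchanging its two ends. So the remark that $e^s$ is larger on the right is a heuristic, not an estimate.

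Your fallback is vacuous. Since $c/x_{j,m-1}=x_{m-j,m-1}$, both test points are zeros of $p_{m-1}$, where $p_m=-(m-1)(m-1+\alpha)p_{m-2}$. The sign count therefore reproduces interlacing, which already gives exactly $j$ zeros of $p_m$ below $x_{j,m-1}$, and says nothing about the product. Deciding $x_{j,m}x_{m+1-j,m}>c$ would need the sign of $p_m$ at a point such as $c/x_{j,m}$, which the recurrence does not control.

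A smaller point: $w_m$ does not vanish as $u\to-\infty$ when $\alpha\le0$; it behaves like $L_m^{(\alpha)}(0)e^{\alpha u/2}$. This includes the Hermite case $\alpha=-1/2$. It is repairable, but the left normalisation must be by that behaviour rather than by decay.

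What is missing is a quantitative input that breaks the left/right symmetry. The paper takes it from the Jacobi matrix. Since $\mathcal J+\mathcal E\mathcal J\mathcal E$ is twice the diagonal part, Loewner monotonicity gives $x_{j,k}+x_{k+1-j,k}\le2(\alpha+2k-1)$. Hence $A+B\le2(c-1)$ and $a+b\le2(c-3)$, with $c=2m+\alpha$.

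The paper then needs no deformation in the degree. At fixed $m$ it works with $g=\beta p_{m-1}/p_m$, whose poles $A,B$ and zeros $a,b$ delimit exactly the two gaps, and with the Riccati equation $xg'=-g^2+(x-c)g-\beta$. On the logarithmic scale, it starts the angle $\arctan(\sqrt\beta/g)$ at $A$ moving right and at $B$ moving left. The two resulting Pr\"ufer equations have coefficients differing by $D(s)=Ae^s+Be^{-s}-2c$. Convexity of $D$ and the two pair-sum bounds give $D<0$ on $[0,\delta_R]$. So the left angle stays strictly ahead of the right one and would reach $\pi/2$ first, which forces $\delta_L<\delta_R$, that is, $ab<AB$.
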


\begin{figure}[H]
\centering
\begin{tikzpicture}[x=0.85cm,y=1cm,font=\small]
  \draw[gray!55] (0,1.3)--(10,1.3);
  \draw[gray!55] (0,0.3)--(10,0.3);
  \node[anchor=east] at (-0.25,1.3) {degree $5$};
  \node[anchor=east] at (-0.25,0.3) {degree $4$};

  \foreach \x in {0,6.752,10}
    \fill[gray!50] (\x,1.3) circle (1.35pt);
  \foreach \x in {0.522,9.233}
    \fill[gray!50] (\x,0.3) circle (1.35pt);

  \coordinate (A) at (4.339,1.3);
  \coordinate (a) at (4.885,0.3);
  \coordinate (b) at (7.352,0.3);
  \coordinate (B) at (8.504,1.3);
  \fill (A) circle (2.2pt);
  \fill (B) circle (2.2pt);
  \fill[gray!70] (a) circle (2.2pt);
  \fill[gray!70] (b) circle (2.2pt);
  \node[anchor=south east] at (A) {$A$};
  \node[anchor=south west] at (B) {$B$};
  \node[anchor=south east] at (a) {$a$};
  \node[anchor=south west] at (b) {$b$};

  \draw[gray!65,densely dashed] (A)--(4.339,-0.35);
  \draw[gray!65,densely dashed] (a)--(4.885,-0.35);
  \draw[gray!65,densely dashed] (b)--(7.352,-0.35);
  \draw[gray!65,densely dashed] (B)--(8.504,-0.35);
  \draw[<->,line width=0.7pt] (4.339,-0.35)--
    node[midway,above=2pt] {$\delta_L$} (4.885,-0.35);
  \draw[<->,line width=0.7pt] (7.352,-0.35)--
    node[midway,above=2pt] {$\delta_R$} (8.504,-0.35);
  \draw[gray!70,->] (0,-0.75)--(10.2,-0.75)
    node[anchor=west,text=black] {$\log x$};
\end{tikzpicture}
\caption{The case $\alpha=0$, $m=5$, and $j=2$, on the logarithmic scale.
The highlighted zeros are $A=x_{2,5}^{(0)}$, $a=x_{2,4}^{(0)}$,
$b=x_{3,4}^{(0)}$, and $B=x_{4,5}^{(0)}$. Thus
\eqref{eq:symmetric-product-main} is
$\delta_L=\log(a/A)<\log(B/b)=\delta_R$.}
\label{fig:symmetric-pairs}
\end{figure}

Taking $j=1$ gives $y_{m-1}(\alpha)<y_m(\alpha)$ for $m\ge2$. For the even
and odd degrees, respectively, \eqref{eq:hermite-laguerre} gives
\begin{align*}
\varpi_{2m}^{2}=y_m(-1/2),\quad
\varpi_{2m+1}^{2}=y_m(1/2).
\end{align*}
Thus Theorem~\ref{thm:symmetric-products} proves the conjecture of Gazeau,
Josse-Michaux, and Monceau for both parities and every $N\ge4$. In fact,
it extends their assertion from the extreme positive Hermite zeros to every
symmetrically placed pair of Laguerre zeros and every $\alpha>-1$.

A one-parameter extension of this conjecture was formulated in
\cite[Conjecture~1.1]{Castillo2026}. For $n\ge5$, define
\begin{align*}
\widehat L_{n+1}^{(\alpha)}(x,t)
=\bigl(x-(2n+\alpha+1)\bigr)p_n(x)
-n(n+\alpha)t^2p_{n-1}(x),\quad 0\le t\le1.
\end{align*}
At $t=1$ this is $p_{n+1}$, whereas at $t=0$ its zeros consist of the
zeros of $p_n$ together with $2n+\alpha+1$, counting multiplicities.
As shown below, $\widehat L_{n+1}^{(\alpha)}(\,\cdot\,,t)$ is the
characteristic polynomial of a real symmetric tridiagonal matrix and
therefore has only real zeros. Let
$g_{n,\alpha}(t)$ denote the product of the smallest and largest zeros of
$\widehat L_{n+1}^{(\alpha)}(\,\cdot\,,t)$.

\begin{theorem}\label{thm:deformation}
Let $\alpha>-1$ and let $n\ge5$ be an integer. There exists a unique
$t_*\in(0,1)$ such
that $g_{n,\alpha}$ is strictly increasing on $[0,t_*]$ and strictly
decreasing on $[t_*,1]$. Moreover,
\begin{align}\label{eq:deformation-inequality}
g_{n,\alpha}(t)>g_{n,\alpha}(0),\quad 0<t\le1.
\end{align}
\end{theorem}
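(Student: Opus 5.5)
The plan is to treat $\widehat L_{n+1}^{(\alpha)}(\,\cdot\,,t)$ as the characteristic polynomial of the Laguerre Jacobi matrix $J(t)$ of order $n+1$. This matrix has diagonal entries $2k+\alpha+1$ and off-diagonal entries $\sqrt{k(k+\alpha)}$, except that the last off-diagonal entry $\sqrt{n(n+\alpha)}$ is replaced by $t\sqrt{n(n+\alpha)}$. Expanding the determinant along the last row gives exactly the displayed recurrence, so the zeros are real.

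Put $s=t^2$, $a=2n+\alpha+1$, $c=n(n+\alpha)$, and let $R(\lambda)=p_{n-1}(\lambda)/p_n(\lambda)=\sum_{k}w_k/(\lambda-x_{k,n}^{(\alpha)})$ with $w_k>0$. The zeros are then the solutions of the secular equation
\begin{align*}
\lambda-a=s\,c\,R(\lambda).
\end{align*}
On $(x_{n,n}^{(\alpha)},\infty)$ and on $(-\infty,x_{1,n}^{(\alpha)})$ the function $(\lambda-a)/(cR(\lambda))$ is strictly monotone. This gives explicit inverse parametrisations $s=S_+(\lambda_{\max})$ and $s=S_-(\lambda_{\min})$. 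In particular $\lambda_{\max}(s)$ increases and $\lambda_{\min}(s)$ decreases strictly in $s$, which also follows from Hellmann--Feynman since the extreme eigenvectors have sign patterns fixed by the positive off-diagonal entries. I also need $x_{n,n}^{(\alpha)}>a$ for $n\ge5$, so that at $s=0$ the largest zero is $x_{n,n}^{(\alpha)}$ and not $a$. I would get this from a standard lower bound for the largest Laguerre zero, for instance comparing with the trace, or with a bound of the form $x_{n,n}>2n+\alpha+1+\dots$; the restriction $n\ge5$ presumably enters here.

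Next I differentiate $\log g=\log\lambda_{\max}+\log\lambda_{\min}$ in $s$. Implicit differentiation of the secular equation gives
\begin{align*}
\frac{d\lambda}{ds}=\frac{cR(\lambda)}{1-scR'(\lambda)},
\end{align*}
where $1-scR'>0$ because $R'<0$. Hence
\begin{align*}
\frac{d\log g}{ds}=\Phi(\lambda_{\max}(s),s)-\Psi(\lambda_{\min}(s),s),
\end{align*}
with $\Phi>0$ and $\Psi>0$. The unimodality claim then reduces to showing that this expression changes sign exactly once on $(0,1)$.

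The route I would try first is to prove that $s\mapsto\Phi$ is strictly decreasing and $s\mapsto\Psi$ strictly increasing, so that $d\log g/ds$ is strictly decreasing. To do this, substitute $s=S_\pm(\lambda)$, which turns $\Phi$ and $\Psi$ into explicit rational functions of a single variable built from $R$ and $R'$. Then check monotonicity from the partial-fraction form, using the convexity and positivity of $R$ on each outer interval. This is the step I expect to be the main obstacle. If it fails, I would fall back on showing that $g$ has no interior critical point that is a local minimum, by analysing the second-order condition at a zero of the derivative.

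With monotonicity of the derivative in hand, it remains to check the endpoint signs. At $s=0$ the derivative equals
\begin{align*}
\frac{cw_n}{x_{n,n}\,(x_{n,n}-a)}-\frac{cw_1}{x_{1,n}\,(a-x_{1,n})},
\end{align*}
writing $x_{k,n}$ for $x_{k,n}^{(\alpha)}$. I must show this is positive, using the known Christoffel-type expressions for the $w_k$ in terms of Laguerre zeros; large $x_{n,n}$ against small $x_{1,n}$ should dominate. At $s=1$ the derivative must be negative, which I would again check by evaluating at $\lambda=x_{n+1,n+1}$ and $\lambda=x_{1,n+1}$. Together these give the unique $t_*\in(0,1)$. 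Finally, \eqref{eq:deformation-inequality} follows directly from unimodality: $g$ increases on $[0,t_*]$, so $g(t)>g(0)$ there, and on $[t_*,1]$ we have $g(t)\ge g(1)=y_{n+1}(\alpha)>y_n(\alpha)=g(0)$ by Theorem~\ref{thm:symmetric-products} with $j=1$.
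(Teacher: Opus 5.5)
Your framework matches the paper's skeleton: the Jacobi matrix, the secular equation, the monotone extreme zeros, and the final deduction of $g_{n,\alpha}(t)>g_{n,\alpha}(0)$ from unimodality plus Theorem~\ref{thm:symmetric-products} with $j=1$. But there is a genuine gap: the step you flag as the main obstacle is false, not merely hard.

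The identities $xp_n'=np_n+cp_{n-1}$ and $xp_{n-1}'=(x-n-\alpha)p_{n-1}-p_n$ give the Riccati equation $xR'=(x-a+1)R-1-cR^2$. Inserting it into your implicit-differentiation formula at a root yields
\[
\frac{d\lambda}{ds}=\frac{\lambda(\lambda-a)}{(1-s)(\lambda-a)^2+as+cs^2}.
\]
Put $U=a-\lambda_{\min}$, $V=\lambda_{\max}-a$ and $D_X=(1-s)X^2+as+cs^2$. Then your terms are $\Phi=V/D_V$ and $\Psi=U/D_U$. At $s=0$ one has $dV/ds=(a+V)/V$ and $dU/ds=(a-U)/U$, so
\[
\Phi'(0)=\frac{V^2-V-2a}{V^3},\qquad\Psi'(0)=\frac{U^2+U-2a}{U^3}.
\]
Take $\alpha=0$, $n=20$. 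Then $a=41$, $x_{1,20}\approx0.0705$, $x_{20,20}\approx66.524$, hence $U\approx40.93$ and $V\approx25.52$. This gives $\Phi'(0)\approx0.033>0$ and $\Phi'(0)-\Psi'(0)\approx0.009>0$.

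So $\Phi$ increases near $s=0$, and so does $d\log g/ds$ itself. The same happens for all large $n$, because $U-V$ grows like $n^{1/3}$. No convexity property of $R$ can repair this. Without a monotone derivative, your two endpoint signs give existence of a sign change but not its uniqueness.

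Your fallback, a second-order analysis at critical points, is the right idea and is in effect the paper's proof, but the proposal supplies none of it. What makes it work is the factorisation
\[
\frac{d\log g}{ds}=\frac{V}{D_V}-\frac{U}{D_U}=\frac{(U-V)\bigl((1-s)UV-as-cs^2\bigr)}{D_UD_V},
\]
together with the strict inequality $U>V$ on all of $[0,1]$. The paper gets $U>V$ from Perron--Frobenius: for $t>0$, $\mathcal E(a\mathcal I_{n+1}-\mathcal J_{n+1}(t))\mathcal E$ is non-negative and primitive, with Perron root $U$ and another eigenvalue $-V$; the case $t=0$ is handled by its leading block. The sign of $d\log g/ds$ is then that of the bracket. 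The bracket is positive at $s=0$ and equals $-a-c$ at $s=1$. At any interior zero $s_0$, the explicit velocities give it derivative $-UV-2cs_0<0$, so it has exactly one zero.

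Your endpoint computations are this same inequality in disguise. Since $cw_k=x_{k,n}$, the $s=0$ value is $1/V-1/U$ and the $s=1$ value is $(V-U)/(a+c)$. The heuristic that a large $x_{n,n}$ dominates a small $x_{1,n}$ proves neither.

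Lastly, the trace only gives $x_{n,n}\ge n+\alpha$, which is below $a$. The paper gets $x_{n,n}>a$ from the trailing $2\times2$ block of $\mathcal J_n$ plus Cauchy interlacing. That block's top eigenvalue $a-3+\sqrt{1+(n-1)(n+\alpha-1)}$ exceeds $a$ for $n\ge5$.
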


The last inequality proves the monic form of Conjecture~1.1 of
\cite{Castillo2026}. The proof will show that the
inequality for $0<t\le1$ reduces to the endpoint $t=1$. We prove the two
theorems in the order stated.

\section{Proof\label{proof}}

\begin{proof}[Proof of Theorem~\ref{thm:symmetric-products}]
The monic Laguerre recurrence is
\begin{align*}
p_{k+1}(x)=\bigl(x-(2k+\alpha+1)\bigr)p_k(x)
-k(k+\alpha)p_{k-1}(x),\quad k\ge1.
\end{align*}
This is the monic form of the recurrence in
\cite[(5.1.10)]{Szego1975}. Expanding a
tridiagonal determinant by its last row gives the same recurrence, with
initial values $1$ and $x-(\alpha+1)$. Hence
$p_k(x)=\det(x\mathcal I_k-\mathcal J_k)$. Thus the zeros of $p_k$ are the
eigenvalues of the Laguerre Jacobi matrix $\mathcal J_k$, whose entries are
\begin{align}\label{eq:Jacobi-entries}
(\mathcal J_k)_{\ell\ell}&=2\ell+\alpha-1,\quad 1\le\ell\le k,\nonumber\\[7pt]
(\mathcal J_k)_{\ell,\ell+1}&=(\mathcal J_k)_{\ell+1,\ell}
=\sqrt{\ell(\ell+\alpha)},\quad 1\le\ell<k.
\end{align}
All remaining entries are zero.
To prove the elementary matrix inequality needed below, we use the
standard monotonicity of ordered eigenvalues in the Loewner order; see
\cite[Corollary~4.3.12]{HornJohnson2013}.

Let $\mathcal J$ be a real symmetric tridiagonal matrix of order $k$. Write
$d_0,\ldots,d_{k-1}$ for its diagonal entries, and let
$\lambda_1(\mathcal J)\le\cdots\le\lambda_k(\mathcal J)$. Then
\begin{align}\label{eq:general-pair-sum}
\lambda_j(\mathcal J)+\lambda_{k+1-j}(\mathcal J)
\le2\max_{0\le\ell<k}d_\ell,\quad 1\le j\le k.
\end{align}
Indeed, let $\mathcal E=\diag(1,-1,1,-1,\ldots)$. Conjugation by
$\mathcal E$ preserves the diagonal and changes the sign of every
off-diagonal entry. For real symmetric matrices $\mathcal S$ and $\mathcal T$
of the same order, we write $\mathcal S\preceq\mathcal T$ when
$\mathcal T-\mathcal S$ is positive semidefinite. Put
$d_*=\max_{0\le\ell<k}d_\ell$, and let $\mathcal I_k$ be the identity matrix of
order $k$. Then
\begin{align*}
\mathcal J+\mathcal E\mathcal J\mathcal E
&=2\diag(d_0,\ldots,d_{k-1}),\\[7pt]
\mathcal J&\preceq2d_*\mathcal I_k-\mathcal E\mathcal J\mathcal E.
\end{align*}
Indeed, the matrix on the right of the second inequality minus $\mathcal J$ is
$2\diag(d_*-d_0,\ldots,d_*-d_{k-1})$, which is positive semidefinite.
The $j$th eigenvalue of the matrix on the right, in increasing order, is
$2d_*-\lambda_{k+1-j}(\mathcal J)$. Monotonicity of ordered eigenvalues under the
Loewner order proves \eqref{eq:general-pair-sum}.

Equations \eqref{eq:Jacobi-entries} and
\eqref{eq:general-pair-sum} give
\begin{align}\label{eq:Laguerre-pair-sum}
x_{j,k}^{(\alpha)}+x_{k+1-j,k}^{(\alpha)}
\le2(\alpha+2k-1),\quad 1\le j\le k.
\end{align}
Fix $m\ge2$ and $1\le j\le\lfloor m/2\rfloor$, and abbreviate
\begin{align*}
A=x_{j,m}^{(\alpha)},\quad
B=x_{m+1-j,m}^{(\alpha)},\quad
a=x_{j,m-1}^{(\alpha)},\quad
b=x_{m-j,m-1}^{(\alpha)}.
\end{align*}
When $m=2j$, the degree $m-1$ is odd and $a=b$ is its central zero.
Strict interlacing \cite[Theorem~3.3.2]{Szego1975} gives
\begin{align}\label{eq:interlacing-AabB}
A<a\le b<B.
\end{align}
Put $c=2m+\alpha$ and $\beta=m(m+\alpha)>0$.
Applied in degrees $m$ and $m-1$, respectively,
\eqref{eq:Laguerre-pair-sum} becomes
\begin{align}\label{eq:two-pair-sum-bounds}
A+B&\le2(c-1),\nonumber\\[7pt]
a+b&\le2(c-3).
\end{align}

Here and below, primes on polynomials and rational functions denote
derivatives with respect to $x$. The Laguerre derivative identity
\cite[(5.1.14)]{Szego1975} gives
$xp_m'=mp_m+\beta p_{m-1}$. Define
\begin{align*}
g(x)=x\frac{p_m'(x)}{p_m(x)}-m
=\beta\frac{p_{m-1}(x)}{p_m(x)}
=\sum_{\ell=1}^{m}\frac{x_{\ell,m}^{(\alpha)}}
{x-x_{\ell,m}^{(\alpha)}}.
\end{align*}
The Laguerre differential equation \cite[(5.1.2)]{Szego1975} then gives
\begin{align}\label{eq:g-equation}
xg'=-g^2+(x-c)g-\beta.
\end{align}
Since all the residues are positive,
\begin{align*}
g'(x)=-\sum_{\ell=1}^{m}\frac{x_{\ell,m}^{(\alpha)}}
{(x-x_{\ell,m}^{(\alpha)})^2}<0.
\end{align*}
Strict interlacing therefore shows that $g$ is positive on
$(A,a)$ and negative on $(b,B)$, with
\begin{align*}
g(A^+)&=+\infty,\quad g(a)=0,\\[7pt]
g(b)&=0,\quad g(B^-)=-\infty.
\end{align*}
Set $\delta_L=\log(a/A)$ and $\delta_R=\log(B/b)$. Suppose, to the
contrary, that $ab\ge AB$, or equivalently $\delta_L\ge\delta_R$. For
$0<s<\delta_L$ and $0<s<\delta_R$, respectively, use the principal
branch of the arctangent, with values in $(-\pi/2,\pi/2)$, to define
\begin{align*}
\theta_L(s)&=\arctan\frac{\sqrt\beta}{g(Ae^s)},\\[7pt]
\theta_R(s)&=\arctan\frac{\sqrt\beta}{-g(Be^{-s})}.
\end{align*}
Extend both functions continuously to their endpoints. Then
\begin{align}\label{eq:function-endpoints}
\theta_L(0)=\theta_R(0)=0,\quad
\theta_L(\delta_L)=\theta_R(\delta_R)=\frac{\pi}{2}.
\end{align}
Under the assumption $\delta_L\ge\delta_R$, both functions are defined on
the common interval $[0,\delta_R]$.
Equation \eqref{eq:g-equation} gives
\begin{align}\label{eq:function-equations}
\theta_L'&=\sqrt\beta-\frac{Ae^s-c}{2}\sin(2\theta_L),\nonumber\\[7pt]
\theta_R'&=\sqrt\beta-\frac{c-Be^{-s}}{2}\sin(2\theta_R).
\end{align}
The primes in these two equations denote derivatives with respect to $s$.

Put $D(s)=Ae^s+Be^{-s}-2c$. This function is
strictly convex, and \eqref{eq:two-pair-sum-bounds} gives
\begin{align*}
D(0)&=A+B-2c\le-2,\\[7pt]
D(\delta_R)&=\frac{AB}{b}+b-2c\le a+b-2c\le-6.
\end{align*}
Here we used $Be^{-\delta_R}=b$ and
$Ae^{\delta_R}=AB/b\le a$. A convex function lies below the chord joining
two of its points, and hence $D(s)<0$ for $0\le s\le\delta_R$.
Let $w=\theta_L-\theta_R$, and define the continuous function
\begin{align*}
r(s)=-(Ae^s-c)\int_0^1
\cos\bigl(2\theta_R(s)+2\tau w(s)\bigr)\,\mathrm d\tau.
\end{align*}
Subtracting the equations in \eqref{eq:function-equations} gives
\begin{align*}
w'=r(s)w+h(s),\quad
h(s)=-\frac{D(s)}2\sin(2\theta_R(s)).
\end{align*}
For $0<s<\delta_R$, we have
$0<\theta_R(s)<\pi/2$ and $D(s)<0$, and hence $h(s)>0$. Both $r$ and $h$
extend continuously to the closed interval, and
$w(0)=0$ by \eqref{eq:function-endpoints}. Since $r$ is bounded and
$w(\varepsilon)\to0$, applying variation of constants on
$[\varepsilon,s]$ and then letting $\varepsilon\downarrow0$ gives
\begin{align*}
w(s)=\int_0^s\exp\left(\int_\tau^s r(\xi)\,\mathrm d\xi\right)
h(\tau)\,\mathrm d\tau>0,\quad 0<s\le\delta_R.
\end{align*}
This
is impossible because \eqref{eq:function-endpoints} and
$\delta_L\ge\delta_R$ give
$\theta_R(\delta_R)=\pi/2$ and $\theta_L(\delta_R)\le\pi/2$.
Therefore $ab<AB$, which is \eqref{eq:symmetric-product-main}. Applying
this inequality for $m=2j,2j+1,\ldots$ proves the final assertion of the
theorem.
\end{proof}

We now prove the one-parameter consequence.

\begin{proof}[Proof of Theorem~\ref{thm:deformation}]
Fix $\alpha>-1$ and $n\ge5$. Put $a=2n+\alpha+1$,
$b=\sqrt{n(n+\alpha)}$, which is positive, and $s=t^2$, and set
\begin{align}\label{eq:F-def}
F(x,s)=(x-a)p_n(x)-b^2s\,p_{n-1}(x).
\end{align}
Let $u(s)<v(s)$ be its smallest and largest zeros, and put
$G(s)=u(s)v(s)$, $U(s)=a-u(s)$, and $V(s)=v(s)-a$.
The zeros of $F$ are the eigenvalues of
\begin{align}\label{eq:deformed-Jacobi}
\mathcal J_{n+1}(t)=
\begin{pmatrix}
\mathcal J_n&tb\,e\\[7pt]
tb\,e^{\mathrm T}&a
\end{pmatrix},
\end{align}
where $e$ is the last coordinate vector in $\mathbb R^n$. Expansion along
the last row gives
$\det(x\mathcal I_{n+1}-\mathcal J_{n+1}(t))=F(x,t^2)$. The matrix in
\eqref{eq:deformed-Jacobi} is positive definite. Indeed, for $t>0$, let
$\mathcal D_t=\diag(\mathcal I_n,t)$. Then
\begin{align*}
\mathcal J_{n+1}(t)=\mathcal D_t\mathcal J_{n+1}(1)\mathcal D_t
+a(1-t^2)e_{n+1}e_{n+1}^{\mathrm T}.
\end{align*}
Here $e_{n+1}$ denotes the last coordinate vector in $\mathbb R^{n+1}$.
Since $\mathcal J_{n+1}(1)$ is the Laguerre Jacobi matrix and $\mathcal D_t$
is nonsingular,
the first term is positive definite and the second is positive semidefinite.
At $t=0$ the matrix is $\diag(\mathcal J_n,a)$, which is also positive definite. In
particular, $u$, $v$, and $G$ are positive.

We shall also need the position of $a$. We use the Rayleigh--Ritz principle
and Cauchy's interlacing theorem in the forms given in
\cite[Theorems~4.2.2 and~4.3.28]{HornJohnson2013}. The Rayleigh quotient at the
first coordinate vector gives $x_{1,n}^{(\alpha)}\le\alpha+1$. Equality
would make the first coordinate vector an eigenvector of $\mathcal J_n$. This is
impossible because the first off-diagonal entry is positive. Hence
$x_{1,n}^{(\alpha)}<\alpha+1<a$. The largest eigenvalue
of the final $2\times2$ principal block of $\mathcal J_n$ is
$a-3+\sqrt{1+(n-1)(n+\alpha-1)}$. This number is greater than $a$, since
$(n-1)(n+\alpha-1)>(n-1)(n-2)\ge12>8$. Interlacing therefore gives
$x_{1,n}^{(\alpha)}<a<x_{n,n}^{(\alpha)}$.
The same interlacing, applied to \eqref{eq:deformed-Jacobi}, shows that
$u<a<v$ for $0\le s\le1$.
For $s>0$ the matrix is an irreducible Jacobi matrix, so all its
eigenvalues are simple. Indeed, the three-term eigenvector recurrence
determines an eigenvector from its first coordinate, which cannot vanish.
At $s=0$, the strict position of $a$ shows that the two extreme
eigenvalues are also simple. The ordered eigenvalues are continuous in
the matrix entries, and the implicit-function theorem at each simple
extreme root shows that $u$ and $v$ are continuous on $[0,1]$ and
continuously differentiable on $(0,1)$.

We first derive the velocity of a zero $x=x(s)$ of $F$. The recurrence and
derivative identity \cite[(5.1.10) and (5.1.14)]{Szego1975} give
\begin{align*}
xp_n'&=np_n+b^2p_{n-1},\\[7pt]
xp_{n-1}'&=(x-n-\alpha)p_{n-1}-p_n.
\end{align*}
For $s>0$, a zero $x$ of $F$ is positive and $p_n(x)\ne0$, since otherwise
$p_n$ and $p_{n-1}$ would have a common zero. Using
$b^2s p_{n-1}(x)=(x-a)p_n(x)$ and writing
$F_x=\partial F/\partial x$, we first obtain
\begin{align*}
xF_x={}&\bigl(x+n(x-a)+b^2s\bigr)p_n+b^2\bigl((x-a)-s(x-n-\alpha)\bigr)p_{n-1}.
\end{align*}
Substitution of the root relation now gives
\begin{align*}
F_x(x,s)=\frac{p_n(x)}{xs}
\bigl((1-s)(x-a)^2+as+b^2s^2\bigr).
\end{align*}
Here $s>0$. Implicit differentiation of \eqref{eq:F-def} therefore yields
\begin{align}\label{eq:zero-velocity}
x_s=\frac{x(x-a)}{(1-s)(x-a)^2+as+b^2s^2}.
\end{align}
Here $x_s=\mathrm dx/\mathrm ds$; all subsequent subscripts $s$ have the
same meaning.
The last formula extends continuously from the right to the extreme
branches at $s=0$;
since neither branch equals $a$ there, its limit is $x/(x-a)$.

The strict comparison between $U$ and $V$ follows from the
Perron--Frobenius theorem for primitive matrices; we use the formulation in
\cite[Section~8.5]{HornJohnson2013}.

Put $\mathcal E=\diag(1,-1,\ldots,(-1)^n)$, of order $n+1$. For $s>0$,
the matrix
$$
\mathcal E\bigl(a\mathcal I_{n+1}-\mathcal J_{n+1}(\sqrt{s})\bigr)\mathcal E
$$
is entrywise non-negative and primitive. Here primitive means
that some power is entrywise positive; irreducibility together with a
positive diagonal entry gives this property. Its Perron root, or spectral
radius, is $U$, whereas $-V$ is another eigenvalue. Thus $U>|V|$, and in
particular $U>V$.
At $s=0$, the position of $a$ proved above shows that $u$ and $v$ are the
extreme eigenvalues of $\mathcal J_n$. The leading block of the displayed Perron
matrix is primitive, with Perron root $U$ and another eigenvalue $-V$, so
$U>V$ also at this endpoint.

Introduce
\begin{align*}
D_U&=(1-s)U^2+as+b^2s^2,\quad
D_V=(1-s)V^2+as+b^2s^2,\\[7pt]
\Phi(s)&=(1-s)UV-as-b^2s^2.
\end{align*}
The quantities $D_U$ and $D_V$ are positive on $[0,1]$: at $s=0$ they
are $U^2$ and $V^2$, while for $s>0$ the term $as$ is positive.
Applying \eqref{eq:zero-velocity} to $u$ and $v$ gives
\begin{align}\label{eq:product-velocity}
\frac{G_s}{G}
=-\frac{U}{D_U}+\frac{V}{D_V}
=\frac{(U-V)\Phi}{D_U D_V}.
\end{align}
Let $s_0\in(0,1)$ be any zero of $\Phi$. Then
\begin{align*}
U(s_0)V(s_0)=\frac{as_0+b^2s_0^2}{1-s_0}>0.
\end{align*}
Substitution in \eqref{eq:zero-velocity} gives
\begin{align*}
U_s(s_0)&=\frac{a-U(s_0)}{(1-s_0)(U(s_0)+V(s_0))},\\[7pt]
V_s(s_0)&=\frac{a+V(s_0)}{(1-s_0)(U(s_0)+V(s_0))}.
\end{align*}
Differentiating $\Phi$ along the two extreme-zero branches and evaluating
at $s_0$, we use
\begin{align*}
(1-s_0)\bigl(U_s(s_0)V(s_0)+U(s_0)V_s(s_0)\bigr)=a.
\end{align*}
It follows that
\begin{align}\label{eq:phi-downcrossing}
\Phi'(s_0)
=-U(s_0)V(s_0)-2b^2s_0<0.
\end{align}
Here $\Phi'$ is the ordinary derivative with respect to $s$ after
$U=U(s)$ and $V=V(s)$ have been substituted; $s_0$ is simply a zero at
which that derivative is evaluated.

From the position of $a$ obtained above,
\begin{align*}
\Phi(0)&=\bigl(a-x_{1,n}^{(\alpha)}\bigr)
\bigl(x_{n,n}^{(\alpha)}-a\bigr)>0,\\[7pt]
\Phi(1)&=-a-b^2<0.
\end{align*}
The function $\Phi$ is continuous on $[0,1]$ and continuously
differentiable on $(0,1)$. Every interior zero is simple and is a strict
downward crossing by \eqref{eq:phi-downcrossing}; hence its zeros are
isolated. Its zero set is closed, and the endpoint signs give zero-free
neighbourhoods of $0$ and $1$. If it were infinite, compactness would give an
interior accumulation point, contradicting isolation. Thus there are only
finitely many zeros. If there were two consecutive ones, $\Phi$ would be
negative immediately after the first and positive immediately before the
second, which would force an intervening zero that is not a downward crossing.
Thus there is exactly one zero, say $s_*$. Equation
\eqref{eq:product-velocity} and $U>V$ prove that $G$ is strictly increasing
on $[0,s_*]$ and strictly decreasing on $[s_*,1]$.

Finally, since $a$ lies between the two extreme zeros of $p_n$,
\begin{align*}
F(x,0)&=(x-a)p_n(x),\quad F(x,1)=p_{n+1}(x),\\[7pt]
G(0)&=y_n(\alpha),\quad \ \ \ \ \ \ \ \ \ \ \ \ G(1)=y_{n+1}(\alpha).
\end{align*}
\begin{samepage}
The strict increase of $G$ on $[0,s_*]$ gives $G(s)>G(0)$ on
$(0,s_*]$, while its strict decrease on $[s_*,1]$ gives
$G(s)>G(1)$ on $[s_*,1)$. Consequently,
$G(s)>G(0)$ for every $0<s\le1$ exactly when
$y_{n+1}(\alpha)>y_n(\alpha)$. Theorem~\ref{thm:symmetric-products}, with
$j=1$ and $m=n+1$, gives this endpoint inequality. Since
$t\mapsto t^2$ is strictly increasing on $[0,1]$, the monotonicity of $G$
transfers to $g_{n,\alpha}$. Returning to $s=t^2$ proves
\eqref{eq:deformation-inequality}, and the unique maximum occurs at
$t_*=\sqrt{s_*}$.
\end{samepage}
\end{proof}

\begin{remark}
For completeness, we record three minor corrections to
\cite{Castillo2026}. Up to a non-zero constant factor, the
ordinary-Laguerre form in Proposition~1.1 has
$+(n+\alpha)t^2L_{n-1}^{(\alpha)}$ as its last term. In Theorem~2.1,
$P_{n+1}$ denotes the characteristic polynomial of the undeformed matrix
$\mathcal J_{n+1}$, and the sign formula involving the auxiliary deformation
function $f$ is understood away from $f=\pm1$. Finally, the
diagonal-dominance comment in
the last paragraph applies directly to $\alpha>0$. These local points do
not change the intended numbered results of that paper and none is used
in the proofs above.
\end{remark}

\section*{Acknowledgements}

This work was supported by the Portuguese Foundation for Science and
Technology through the Centre for Mathematics of the University of Coimbra,
project UID/00324/2025, and through project
2022.00143.\allowbreak CEECIND/\allowbreak CP1714/\allowbreak CT0002.

\bibliographystyle{plain}
\bibliography{bib}

\begin{thebibliography}{1}

\bibitem{Castillo2026}
K.~Castillo.
\newblock On the product of the extreme zeros of {Laguerre} polynomials.
\newblock {\em Results Math.}, 81(1), 2026.
\newblock Art.~9, 14 pp.

\bibitem{GazeauJosseMonceau2006}
J.-P. Gazeau, F.-X. Josse-Michaux, and P.~Monceau.
\newblock Finite dimensional quantizations of the $(q,p)$ plane: New space and
  momentum (or quadratures) inequalities.
\newblock {\em Int. J. Mod. Phys. B}, 20(11--13):1778--1791, 2006.

\bibitem{HornJohnson2013}
R.~A. Horn and C.~R. Johnson.
\newblock {\em Matrix Analysis}.
\newblock Cambridge University Press, New York, 2nd edition, 2013.
\newblock Corrected reprint, 2018.

\bibitem{Szego1975}
G.~Szeg\H{o}.
\newblock {\em Orthogonal Polynomials}, volume~23 of {\em American Mathematical
  Society Colloquium Publications}.
\newblock American Mathematical Society, Providence, RI, 4th edition, 1975.

\end{thebibliography}

\end{document}